\theoremstyle{definition} 
 \newtheorem{definition}{Definition}[section]
 \newtheorem{remarque}[definition]{Remark}
  \newtheorem{prob}[definition]{Problem}
\theoremstyle{plain}      
 \newtheorem{proposition}[definition]{Proposition}
 \newtheorem{corollaire}[definition]{Corollary}
\DeclareMathOperator{\ii}{i}
\DeclareMathOperator{\id}{id}
\DeclareMathOperator{\im}{Im}
\DeclareMathOperator{\Co}{\mathbb{C}}
\DeclareMathOperator{\dz}{\partial_{z}}
\DeclareMathOperator{\dx}{\partial_{x}}
\DeclareMathOperator{\dy}{\partial_{y}}
\DeclareMathOperator{\dzb}{\partial_{\overline{z}}}
\DeclareMathOperator{\Mod}{mod}
\DeclareMathOperator{\jac}{Jac}
\DeclareMathOperator{\Kf}{\textit{K}}
\DeclareMathOperator{\bd}{\partial\!\mathbb{D}}
\DeclareMathOperator{\dd}{\mathbb{D}}
\definecolor{darkred}{rgb}{0.9,0.,.2}
\definecolor{darkblue}{rgb}{0.,0.,.6}
\definecolor{darkgreen}{rgb}{0.,.6,0.1}
\newcommand{\eq}[1][r]
   {\ar@<-3pt>@{-}[#1]
    \ar@<-1pt>@{}[#1]|<{}="gauche"
    \ar@<+0pt>@{}[#1]|-{}="milieu"
    \ar@<+1pt>@{}[#1]|>{}="droite"
    \ar@/^2pt/@{-}"gauche";"milieu"
    \ar@/_2pt/@{-}"milieu";"droite"}
\def\commutatif{\ar@{}[rd]|{\circlearrowleft}}
\begin{document}

\title[Commentary]{A Commentary on Teichm\"uller's paper \emph{Verschiebungssatz der quasikonformen Abbildung} Deutsche Math. 7 (1944), 336-343}

\author{Vincent Alberge}
\address{
Institut de Recherche Math\'ematique Avanc\'ee\\ CNRS et  Universit\'e de Strasbourg\\\small 7 rue Ren\'e
  Descartes - 67084 Strasbourg Cedex, France\\
email:\,\tt{alberge@math.unistra.fr}
}

\begin{abstract}
This is a  commentary on Teichm\"uller's paper \textit{Ein Verschiebungs\-satz der quasikonformen Abbildung} (A displacement theorem of quasiconformal mapping), published in 1944. We explain in detail how Teichm\"uller solves the problem of finding the quasiconformal mapping from the unit disc to itself, sending $0$ to  a strictly negative point on the real line, holding the boundary of the disc pointwise fixed and with the smallest quasiconformal dilatation. We mention also some consequences of this extremal problem and we ask a question.
\end{abstract}

\maketitle

\section{Introduction}

This is a commentary on Teichm\"uller's paper \textit{Ein Verschiebungssatz der quasi\-konformen Abbildung}, published in 1944\footnote{Note that this paper appeared after his death which occured in 1943 on the Eastern Front.} (see \cite{T31}). We refer to the English translation  which appears in this volume. The present paper is part of a series of commentaries written by various authors on papers of Teichm\"uller. These papers contain some ideas which are still unknown to Teichm\"uller theorists, see for example \cite{papadop&acampo&ji}, \cite{alberge&papadop&su} and \cite{T29C}.
The paper \cite{T31} is one of the last that Teichm\"uller wrote, and especially the last one about quasiconformal maps. In this paper, he solved the following geometric problem:

\begin{prob}\label{mainpb} Find and describe the quasiconformal map from the unit disc to itself such that 
\begin{itemize}
\item its restriction to the unit circle is the identity map,
\item the image of $0$ is $-x$, where $0<x<1$,
\item its quasiconformal dilatation is as small as possible.
\end{itemize}
\end{prob}

As Teichm\"uller wrote at the beginning of his paper, this extremal problem is rather different from those studied in \cite{T20}. It is due to the fact that mappings fix all boundary points and not only a finite number of such points. The paper \cite{T20} is at the foundation of the theory that we call now the \textit{classical Teichm\"uller theory}.

In order to solve Problem \ref{mainpb}, Teichm\"uller used an  idea already contained in §$23$ and §$24$ of \cite{T20}.\footnote{Even if the idea was already used, the result was not known from specialists. We refer to \cite{kuhnau} and especially to what Grötzsch told to K\"uhnau about this paper: ``Ja...ah, das habe ich nicht gehabt [Okay, this I did not have].''} Indeed, he obtained an equivalent problem (see Problem \ref{pbb} in Subsection \ref{subsection}) by taking ramified coverings, which turns out to be  simpler. 
Let us say a few words about that. First, he constructed, using explicit conformal maps, two $2$-sheeted branched coverings of the unit disc,  the first one branched at $0$ and the other at $-x$. These two covering spaces can be conformally represented by ellipses with  data depending on $x$. This construction shows that the main problem is equivalent to a problem of minimization of the quasiconformal dilatation for mappings between  two ellipses with a particular condition on the boundary. After that, he showed, using the  Cauchy-Schwarz inequality (as in the solution to the Gr\"otzsch problem), that the extremal map between the ellipses is given by an affine transformation. Finally, he gave for the quasiconformal dilatation of the extremal map a lower bound depending on $x$ and an asymptotic behaviour when $x$ approaches $0$. 

In \cite{T31}, Teichm\"uller did not give definitions; all the definitions he used are in \cite{T20}. This is why in these notes we will change the organization of the text in comparison with \cite{T31}; but we will keep all the ideas from Teichm\"uller.  

After recalling some notation and definitions, especially about quasiconformal mappings, we will explain the Gr\"otzsch problem and we will recall the notion of Gr\"otzsch module. We will then give  details on the proof of Teichm\"uller. We will conclude by some applications of this result.

\bigskip

\noindent {\bf Acknowledgements.} The author would like to express his sincere gratitude to Professor Athanase Papadopoulos for giving an opportunity for writing this commentary and especially for his patience and kindness. The author wants to thanks Professor Ken'ichi Ohshika for his helpful comments on a previous version. The author also thanks Professor Irwin Kra for his interest and for a correspondence on these results. The author would like to thank Professor Hideki Miyachi for his suggestions. 

This work was partially supported by the French ANR grant FINSLER. The author acknowledges support from U.S. National Science Foundation grants DMS 1107452, 1107263, 1107367 "RNMS: GEometric structures And Representation varieties" (the GEAR Network).

\section{Preliminaries}\label{section}

All along this paper, we shall be interested in planar quasiconformal mappings. Unless otherwise noted, all domains that we consider are  connected subsets of the Riemann sphere $\overline{\mathbb{C}}:=\mathbb{C}\cup\left\lbrace \infty \right\rbrace$. There are several books which deal with quasiconformal mappings, see e.g. \cite{ahlforsqc}, \cite{lehto&virtanen} or \cite{fletcher&markovic}.

We give below two equivalent definitions of quasiconformal maps. Both of them are interesting and they introduce notions (module and quasiconformal dilatation) that Teichm\"uller used to solve Problem \ref{mainpb}. 

A \emph{quadrilateral} $\mathcal{Q}$ is a Jordan domain (i.e. a simply connected domain whose boundary is a Jordan curve) with four distinct boundary points. Sometimes, we will denote by $\mathcal{Q}\left( a, b , c ,d \right)$ such a quadrilateral, where $a$, $b$, $c$ and $d$ are boundary points and we shall usually assume that these four points appear on the boundary in that order. 
By applying successively the Riemann Mapping Theorem, the Carath\'{e}odory Theorem\footnote{The theorem referred to is known as the \textit{boundary correspondance theorem}.} and a suitable Schwarz-Christoffel mapping, we know that $\mathcal{Q}$ is conformally equivalent (i.e. there exists a holomorphic bijection) to a rectangle $\mathcal{R}$ of vertical side length $1$ and horizontal length side $m$, for some uniquely defined $m>0$.\footnote{To be more precise, the interior of $\mathcal{Q}$ is sent (conformally) onto the upper half-plane $\mathbb{H}$ and this map can be extended to a homeomorphism from $\mathcal{Q}$ to $\mathbb{H}\cup \mathbb{R}\cup \left\lbrace \infty \right\rbrace$. Moreover, the four distinguished points of $\mathcal{Q}$ are sent respectively to $0$, $1$, $\lambda$ and $\infty$ for some $\lambda>1$. Finally, $z\mapsto c\int_{z}{\frac{d\!\zeta}{\sqrt{ \zeta\left( \zeta - 1 \right) \left( \zeta - \lambda \right)}}}$ (for a suitable $c$) maps the upper half-plane onto the rectangle $\mathcal{R}$ of vertical length side $1$ and horizontal length side $m$.} We call the \textit{module} of $\mathcal{Q}$, denoted by $\Mod \left( Q \right)$, the number $m$. 

A \emph{doubly-connected domain} $\mathcal{C}$ is a connected domain whose boundary is the union of two disjoint Jordan curves. As for the quadrilateral, we can associate a module to a doubly-connected domain. We know that such a domain is conformally equivalent to an annulus whose inner radius is $1$ and outer radius is $R$, for some $R>1$. We call the \textit{module} of $\mathcal{C}$, denoted by $\Mod\left( \mathcal{C}\right)$, the number  $\frac{1}{2\pi}\log\left( R \right)$. 

\begin{remarque} Another way to introduce the module is to define it as the inverse of  \textit{extremal length} of a particular family of curves. This relation enables us to extend the notion of quasiconformal mapping in higher dimensions. 
\end{remarque}

\begin{definition}[Geometric definition\label{def1}]
Let $\Omega$ be an open set in $\Co$. Let $f : \Omega \rightarrow f\left( \Omega\right)$ be an orientation-preserving homeomorphism. We say that $f$ is \textit{quasiconformal} if there exists $\Kf \geq 1$ such that for any quadrilateral $\mathcal{Q}\subset\Omega$,

$$
\Mod\left( f\left( \mathcal{Q} \right)\right)\leq \Kf\cdot\Mod \left( \mathcal{Q} \right).
$$

In this case, we set $\Kf_f := \sup_{\mathcal{Q}}{\frac{\Mod\left( f\left( \mathcal{Q} \right)\right)}{\Mod \left( \mathcal{Q} \right)}}$ and we call it the \emph{quasiconformal dilatation}\footnote{This is not the term that Teichm\"uller used in his papers (for example \cite{T20}, \cite{T31} and \cite{T29}). He used the term ``dilatation quotient.'' In the current literature, we can also find the terms ``maximal dilatation,'' ``dilatation,'' ``distorsion'' or ``quasiconformal norm.''} of $f$. Moreover,  we say that $f$ is $\Kf_f$-quasiconformal.
\end{definition}

To simplify notation, we write \textit{q.c.} instead of quasiconformal.

With this definition, it is  easy to see that for $f_1$ and $f_2$ respectively $K_1$-q.c. and $K_2$-q.c. on suitable domains,  $f_1 \circ f_2$  is $K_1 K_2$-q.c.

We can show that $f$ is conformal if and only if $\Kf =1$. Thus, if $g$ and $h$ are conformal, then $g\circ f \circ h$ has the same q.c. dilatation as $f$. 

Before giving an equivalent definition of q.c. mappings, we recall that a map $f$ is \emph{absolutely continuous on lines} (\textit{ACL}) in a domain $\Omega$ if for every rectangle $R:=\left\lbrace x+\ii y \, \mid \, a<x<b,\, c<y<d \right\rbrace$ in $\Omega$, $f$ is absolutely continuous as a function of $x$ (resp. $y$) on almost all segments $I_y := \left\lbrace x+\ii y \,\mid\, a<x<b\right\rbrace$ (resp. $I_x := \left\lbrace x+\ii y \, \mid \, c<y<d \right\rbrace$).
 We can show that such a function $f$ is differientiable almost everywhere (a.e.) in $\Omega$.

The second equivalent definition of quasiconformality is the following.

\begin{definition}[Analytic definition]\label{def2}
Let $\Omega$ be an open set of $\Co$. Let $f : \Omega \rightarrow f\left( \Omega\right)$ be a homeomorphism. We say that $f$ is $\Kf$-quasiconformal if
\begin{enumerate}
\item $f$ is ACL on $\Omega$,
\item $\vert \dzb f \vert \leq k\cdot\vert \dz f \vert$  (a.e), where $k=\frac{K-1}{K+1}$.
\end{enumerate}
\end{definition}

We recall that 
\begin{align*}
\begin{cases}
\dz f &= \frac{1}{2}\left( \dx f -\ii\dy f \right), \\
\dzb f &= \dfrac{1}{2}\left( \dx f +\ii\dy f \right), \\ 
\jac \left( f \right) &= \vert \dz f \vert ^2 - \vert \dzb f \vert^2 .
\end{cases}
\end{align*}

For a q.c. mapping $f$ (in the sense of Definition \ref{def2}), we can show that 
$$
\Kf_{f}=\underset{z\in\Omega}{\textrm{ess.sup}}{\frac{\vert \dz f \left( z \right) \vert+\vert \dzb f \left( z \right) \vert}{\vert \dzb f \left( z \right)\vert-\vert \dzb f\left( z \right) \vert}}.
$$

According to the introduction of \cite{lehto&virtanen}, Definition \ref{def2} was introduced by Morrey in \cite{morrey}. Moreover, Definition \ref{def1} is due  to Ahlfors (see \cite{ahlforsqcintro}). Works by Bers, Mori and Yûjôbô\footnote{This list is not exhaustive.} show that Definition \ref{def1} and Definition \ref{def2} are equivalent. A proof can be found in \cite{ahlforsqc}. 

\sloppy
Gr\"otzsch introduced q.c. mappings in \cite{rd3} under the name ``nichtkonforme Abbildungen.''\footnote{The name \textit{quasiconformal} is now credited to Ahlfors, but this is debatable (see the commentary related to \cite{ahlforsoeuvrescompletes} p. 213).} His definition is similar to Definition \ref{def2} but with a stronger hypothesis: he assumes the maps  differentiable. Teichm\"uller was the first to use q.c. mappings in a substantial manner. He developed this theory to study  the \textit{Teichm\"uller space}\footnote{Teichm\"uller  called it ``the space of topologically determined principal regions.''} and  the so-called \textit{Teichm\"uller metric}. Note that he used in \cite{T20}, and \cite{T31} the Gr\"otzsch definition for q.c. mappings in a slightly different form. He considered mappings which are differentiable ``up to finitely many closed analytical curve segments.'' Since nothing changes in Teichm\"uller's proof (Proposition \ref{prop} below) we shall use Definition \ref{def2} for q.c. mappings. In particular, using the point of view of Definition \ref{def2}, we can show  that for any q.c. mapping $f$, $\dz f \neq 0$ (a.e.). 

\fussy

\section{Gr\"otzsch's problem and Gr\"otzsch's domain}

\subsection{Gr\"otzsch's problem}
To conclude the solution of Problem \ref{mainpb}, Teichm\"uller used the same idea as Gr\"otzsch used in \cite{rd4} to solve what we call now the Gr\"otzsch problem. In fact, Teichm\"uller used a kind of generalization of Gr\"otzsch's problem in his development of   classical Teichm\"uller theory. So, let us present this problem.

Let $\mathcal{R}_1$ and  $\mathcal{R}_2$ be two quadrilaterals. As already mentioned at the beginning of Section \ref{section}, we can suppose that $\mathcal{R}_1$ and $\mathcal{R}_2$ are rectangles whose modules are respectively $a_1$ and $a_2$ (see Figure \ref{fig1}).

\begin{center}
\begin{figure}[!ht]
\centering
\psfrag{a1}{$a_1$}
\psfrag{a2}{$a_2$}
\psfrag{R1}{$\mathcal{R}_1$}
\psfrag{R2}{$\mathcal{R}_2$}
\psfrag{1}{$1$}
\includegraphics[width=0.8\linewidth]{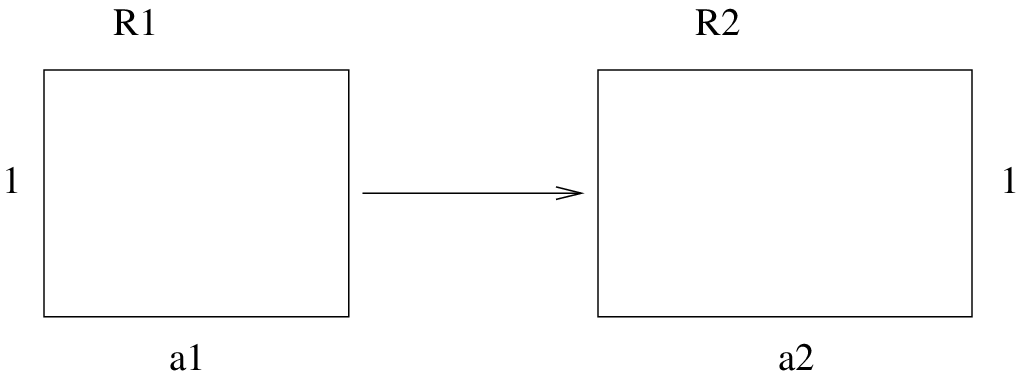}
\caption{}\label{fig1}
\end{figure}
\end{center}

The \emph{Gr\"otzsch problem} is the following: 
\begin{quote}
\textit{Find and describe the q.c. mapping from $\mathcal{R}_1$ to $\mathcal{R}_2$ which preserves sides and with the smallest q.c. dilatation.}
\end{quote}
First, we have to find a q.c. mapping from $\mathcal{R}_1$ to $\mathcal{R}_2$. The simplest map is the following   
$$z \mapsto \frac{1}{2}\left( 1+\frac{a_2}{a_1}\right)\cdot z + \frac{1}{2} \left( \frac{a_2}{a_1}-1\right)\cdot \overline{z},$$ and its q.c. dilatation is equal to $\max\left( \frac{a_1}{a_2} , \frac{a_2}{a_1} \right)$. By the Cauchy-Schwarz inequality, we check that if $f$ is a q.c. mapping between these two rectangles, then $\Kf_f \geq \max\left( \frac{a_1}{a_2} , \frac{a_2}{a_1} \right)$, with equality if and only if $f$ is the  affine map above. This solves this problem.

Teichm\"uller used the same principle. He found a q.c. mapping which can be a candidate and he showed in the same manner that this map is the one with the smallest q.c. dilatation.

\subsection{Gr\"otzsch's domain and its associated module}

A Gr\"otzsch domain is an extremal domain for the following problem. Let $R>1$ be a real number and let $\Omega\subset\mathbb{C}$ be a doubly-connected domain separating the unit circle $\partial\!\dd$ from $\left\lbrace R, \infty \right\rbrace$. Such a domain has a module and we want to know whether there exists  a domain whose module is maximal. The answer is yes, and we can describe it. 

This domain, now called a \emph{Gr\"otzsch domain}, is $\mathbb{C}\setminus\left( \overline{\dd}\cup\left[R, \infty\right)\right)$. We denote its module by $\frac{1}{2\pi}\log{\left(\Phi \left( R \right)\right)}$. Before Teichm\"uller, some facts about the map $\Phi$ were already known to Gr\"otzsch (see \cite{rd1, rd2}), like the fact that $\Phi : \left] 1, + \infty \right[ \rightarrow \left] 1, +\infty \right[$ is an increasing continuous function such that
\begin{equation}\label{encadr}
\forall R >1; \; \; R<\Phi\left( R \right) < 4R,
\end{equation}
and
\begin{equation}\label{asympt}
\lim_{R\rightarrow \infty}{\left( \Phi\left( R \right) - 4R \right)}=0.
\end{equation}

According to \cite{T31}, Teichm\"uller ``proved in a \textit{purely geometric way}'' properties (\ref{encadr}) and (\ref{asympt}). See \cite{teichmuller3} for these proofs.

We give below a  functional relation with a sketch of proof. Let $\alpha$ be a positive number strictly less than $1$. The doubly-connected domain $\mathbb{D}\setminus\left( \left[ -\alpha, \alpha\right]\right)$ has a module which is (with our notation) $$\frac{1}{2\pi}\log\left(\Phi\left( \frac{1}{2}\left( \alpha+\frac{1}{\alpha}\right) \right)\right).$$ Indeed, the image of $\mathbb{D}\setminus\left( \left[ -\alpha, \alpha\right]\right)$ by  the biholomorphism of $\mathbb{D}$ sending $-\alpha$ to $0$ and $ \alpha$ to $\frac{2\alpha}{1+\alpha^2}$ is  $\mathbb{D}\setminus\left[0, \frac{2\alpha}{1+\alpha^2} \right]$. By applying $z\mapsto 1/z$, we see that its module is exactly what we wrote. 
Moreover, the Gr\"otzsch domain associated with $1/\alpha^2$ is equivalent to $$\mathbb{C}\setminus\left(\left[ -1,1 \right]\cup\left[\frac{1}{2}\left( \alpha^2 +\frac{1}{\alpha^2}\right) , \infty \right) \right).$$ To see this, we use the map $z \mapsto \frac{1}{2}\left( z+\frac{1}{z}\right)$. Note that this map will be important in the solution of our problem and also that it is the bridge between the Gr\"otzsch domain and what is now called  the \textit{Teichm\"uller domain} (see Chapter 3 of \cite{ahlforsqc}). Now by $z\mapsto \frac{\alpha}{1+\alpha^2}\left( z+1\right)$,  we reach $$\mathbb{C}\setminus\left(\left[ 0,\left( \frac{1}{2}\left( \alpha+\frac{1}{\alpha}\right)\right)^{-1}\right]\cup \left[ \frac{1}{2}\left( \alpha+\frac{1}{\alpha}\right), \infty \right)\right).$$ This  domain has, by  reflection with respect to the unit circle, a module equal to $$\frac{1}{\pi}\log\left( \Phi\left( \frac{1}{2}\left( \alpha+\frac{1}{\alpha}\right)\right)\right).$$ Thus, we obtain the following relation
\begin{equation}\label{relationfonctionnelle}
\Phi\left( \frac{1}{2}\left( \alpha +\frac{1}{\alpha}\right)\right)=\sqrt{\Phi\left( \frac{1}{\alpha^2} \right)}.
\end{equation}

The expressions  ``Gr\"otzsch's domain'' and  ``Teichm\"u\-ller's domain'' are used by Ahlfors  in \cite{ahlforsqc}  and also by Lehto and Virtanen in \cite{lehto&virtanen} whose  German version was published in 1965. The author of this report does not know who was the first person to use this terminology.

\section{The solution of Teichm\"uller's problem}

\subsection{A simple case}\label{subsection}
For two strictly positive real numbers $\alpha$ and $\beta$, we denote by $\mathcal{E}\left( \alpha, \beta\right)$  the ellipse whose centre is the origin and the major (resp. minor) axis is equal to $\alpha$ (resp. $\beta$). We want to solve the following extremal problem:

\begin{prob}\label{pbb}
Is there a q.c. mapping from $\mathcal{E}\left( \alpha, \beta\right)$ to $\mathcal{E}\left( \beta, \alpha\right)$ with the smallest q.c. dilatation and whose  restriction to the boundary coincides with the restriction of $h_0 : z\mapsto \frac{1}{2}\left(\frac{\alpha}{\beta}+\frac{\beta}{\alpha} \right)\cdot z + \frac{1}{2}\left( \frac{\alpha}{\beta}-\frac{\beta}{\alpha}\right)\cdot \overline{z}$ ?
\end{prob}

It is easy to show that $h_0$ sends $\mathcal{E}\left( \alpha, \beta\right)$ to $\mathcal{E}\left( \beta, \alpha\right)$ with  the good behaviour at the boundary. Moreover, its q.c. dilatation is equal to $\max\left( \frac{\alpha^2}{\beta^2}, \frac{\beta^2}{\alpha^2} \right)$. We will show that this is the smallest q.c. dilatation with the given conditions.

\begin{proposition}\label{prop}
Let $f : \mathcal{E}\left( \alpha, \beta\right)\mapsto\mathcal{E}\left( \beta, \alpha\right) $ be 	a  q.c. mapping such that $f_{\mid_{\partial \mathcal{E}\left( \beta, \alpha\right)}}$ coincides with the restriction of $h_0$. Then,
$$
\Kf_{f}\geq \max\left( \frac{\alpha^2}{\beta^2}, \frac{\beta^2}{\alpha^2} \right).
$$
\end{proposition}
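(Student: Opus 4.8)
WLOG assume $\alpha>\beta$, so that the target dilatation is $\alpha^2/\beta^2$. The idea is to estimate the module of a well-chosen family of quadrilaterals (or rather the "vertical" direction of the ellipse) and compare it with the module of its image under $f$, exploiting the fact that $f$ agrees with the affine map $h_0$ on the boundary. Concretely, I would slice the ellipse $\mathcal{E}(\alpha,\beta)$ by vertical lines $x=\text{const}$: for each fixed $x$ with $|x|<\alpha$, the intersection with the ellipse is a vertical segment, and integrating the length distortion of $f$ along such segments, then integrating in $x$ and applying Cauchy--Schwarz, is exactly the mechanism that forces the lower bound.

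\textbf{Step 1 (set-up and reductions).} Assume $\alpha\ge\beta$; the other case is symmetric under swapping roles, and if $\alpha=\beta$ the bound is trivial since $\Kf_f\ge 1$. Recall from the preliminaries that a q.c.\ map is ACL and differentiable a.e., with $\dz f\neq 0$ a.e., so all the pointwise derivative manipulations below are legitimate. Note that $h_0$ maps the vertical segment through $x$ in $\mathcal{E}(\alpha,\beta)$ to the vertical segment through $(\alpha/\beta)x$ in $\mathcal{E}(\beta,\alpha)$, scaling vertical lengths by $\beta/\alpha$ — this is the quantitative boundary information we will feed in.

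\textbf{Step 2 (length--area with Cauchy--Schwarz).} For a.e.\ $x\in(-\alpha,\alpha)$, write $I_x$ for the open vertical segment $\{x+\ii y : (x,y)\in\mathcal{E}(\alpha,\beta)\}$, of length $2\beta\sqrt{1-x^2/\alpha^2}$. Along $I_x$ the image curve $f(I_x)$ joins the two boundary points $h_0(x-\ii\beta\sqrt{1-x^2/\alpha^2})$ and $h_0(x+\ii\beta\sqrt{1-x^2/\alpha^2})$, which differ only in their imaginary parts by $2\alpha\sqrt{1-x^2/\alpha^2}$. Hence the Euclidean length of $f(I_x)$ is at least $2\alpha\sqrt{1-x^2/\alpha^2}$, giving
\[
2\alpha\sqrt{1-\frac{x^2}{\alpha^2}}\ \le\ \int_{I_x}\left|\partial_y f\right|\,dy\ \le\ \int_{I_x}\bigl(|\dz f|+|\dzb f|\bigr)\,dy .
\]
Squaring and applying Cauchy--Schwarz on the segment $I_x$,
\[
4\alpha^2\Bigl(1-\frac{x^2}{\alpha^2}\Bigr)\ \le\ 2\beta\sqrt{1-\frac{x^2}{\alpha^2}}\int_{I_x}\bigl(|\dz f|+|\dzb f|\bigr)^2\,dy .
\]
Dividing by $2\beta\sqrt{1-x^2/\alpha^2}$ and integrating over $x\in(-\alpha,\alpha)$ yields
\[
\frac{2\alpha^2}{\beta}\int_{-\alpha}^{\alpha}\sqrt{1-\frac{x^2}{\alpha^2}}\,dx\ \le\ \iint_{\mathcal{E}(\alpha,\beta)}\bigl(|\dz f|+|\dzb f|\bigr)^2\,dx\,dy .
\]
The left-hand integral is $\tfrac{\pi}{2}\alpha$, so the left side equals $\pi\alpha^3/\beta$.

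\textbf{Step 3 (bounding the area integral and concluding).} Now estimate the right-hand side using $(|\dz f|+|\dzb f|)^2 \le \Kf_f\,(|\dz f|^2-|\dzb f|^2) = \Kf_f\,\jac(f)$ pointwise a.e. Since $f$ is a homeomorphism onto $\mathcal{E}(\beta,\alpha)$, the change-of-variables inequality $\iint_{\mathcal{E}(\alpha,\beta)}\jac(f)\le \operatorname{Area}\bigl(\mathcal{E}(\beta,\alpha)\bigr)=\pi\alpha\beta$ holds. Combining,
\[
\frac{\pi\alpha^3}{\beta}\ \le\ \Kf_f\iint_{\mathcal{E}(\alpha,\beta)}\jac(f)\ \le\ \Kf_f\,\pi\alpha\beta ,
\]
whence $\Kf_f\ge \alpha^2/\beta^2=\max(\alpha^2/\beta^2,\beta^2/\alpha^2)$, as claimed. \textbf{The main obstacle} is making the length estimate in Step 2 fully rigorous with only ACL regularity: one must invoke the ACL property to justify that $\int_{I_x}|\partial_y f|\,dy$ bounds the length of $f(I_x)$ for a.e.\ $x$, and use the boundary correspondence with $h_0$ to pin down the endpoints of $f(I_x)$; this is precisely the point where Teichmüller's ``differentiable up to finitely many analytic arcs'' hypothesis is convenient, but it goes through for Definition \ref{def2} by the standard Fubini-type argument for ACL maps. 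The change-of-variables inequality $\iint\jac(f)\le\operatorname{Area}(f(\Omega))$ for q.c.\ homeomorphisms is classical and may be quoted.
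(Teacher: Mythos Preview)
Your overall strategy is the same as the paper's (Gr\"otzsch-type length--area estimate with Cauchy--Schwarz), but there is a genuine computational error in Step~2 that makes the argument collapse. You correctly observe in Step~1 that $h_0$ scales vertical lengths by $\beta/\alpha$; yet in Step~2 you assert that the endpoints $h_0\bigl(x\pm\ii\beta\sqrt{1-x^2/\alpha^2}\bigr)$ differ in their imaginary parts by $2\alpha\sqrt{1-x^2/\alpha^2}$. In fact, since $h_0(x+\ii y)=\tfrac{\alpha}{\beta}x+\ii\tfrac{\beta}{\alpha}y$, the imaginary parts of those two image points are $\pm\tfrac{\beta^2}{\alpha}\sqrt{1-x^2/\alpha^2}$, so they differ by $\tfrac{2\beta^2}{\alpha}\sqrt{1-x^2/\alpha^2}$, not $2\alpha\sqrt{1-x^2/\alpha^2}$. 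If you feed this correct value through your Steps~2--3 you obtain only $\Kf_f\ge\beta^2/\alpha^2$, which under your standing assumption $\alpha>\beta$ is strictly less than $1$ and hence vacuous.

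The point is that you sliced in the wrong direction: with $\alpha>\beta$, vertical segments are \emph{contracted} by $h_0$, and a length--area argument based on a contraction cannot yield a nontrivial lower bound on $\Kf_f$. The paper instead slices $\mathcal{E}(\alpha,\beta)$ by \emph{horizontal} lines $\im(z)=y$; the endpoints of such a segment of length $l(y)$ are sent by $h_0$ to points whose real parts differ by $\tfrac{\alpha}{\beta}\,l(y)$, so the image curve has length at least $\tfrac{\alpha}{\beta}\,l(y)$. Running your Cauchy--Schwarz and Jacobian estimates with this stretching factor gives $\tfrac{\alpha^2}{\beta^2}\cdot\mathrm{Area}(\mathcal{E}(\alpha,\beta))\le \Kf_f\cdot\mathrm{Area}(\mathcal{E}(\beta,\alpha))$ and hence $\Kf_f\ge\alpha^2/\beta^2$. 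Swapping to vertical slices (or interchanging the roles of $\alpha$ and $\beta$) then gives the other inequality. Everything else in your write-up---the ACL justification, the pointwise bound $(|\dz f|+|\dzb f|)^2\le\Kf_f\,\jac(f)$, and the area inequality---is fine.
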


\begin{proof}
As in the solution of the Gr\"otzsch problem, we can suppose that $f$ is continuously differentiable in both directions. Let $y\in\left] -\beta, \beta\right[$. We denote by $l\left( y \right)$ the Euclidean length of the segment $\im \left( z \right)=y$ in $\mathcal{E}\left( \alpha, \beta\right)$. We parametrize this segment by $\gamma_y : t\in \left[ -\frac{l\left( y \right)}{2}, \frac{l\left( y \right)}{2} \right]$. Due to the hypothesis on $f_{\mid_{\partial \mathcal{E}\left( \beta, \alpha\right)}}$, the length of $f\circ\gamma_y$ is bigger than $\frac{\alpha}{\beta}\cdot l\left( y \right)$. We have the following inequality
\begin{equation}\label{eq2}
\frac{\alpha}{\beta}\cdot l\left( y \right)\leq \int_{-\frac{l\left( y \right)}{2}}^{\frac{l\left( y \right)}{2}}{\vert \left( f\circ \gamma_y \right)^\prime \left( t \right) \vert dt}.
\end{equation}
But 
$$
\left( f\circ \gamma_y \right)^\prime \left( t \right)= \dz\! f\left( \gamma_y \left( t \right)\right)\cdot \gamma_y ^\prime \left( t \right)+\dzb\!f \left( \gamma_y \left( t \right)\right)\cdot \overline{\gamma_y ^\prime \left( t \right)},
$$
so
\begin{align}\label{eq3}
\vert\left( f\circ \gamma_y \right)^{\!\prime} \left( t \right)\vert &\leq \left( \vert \dz\! f\left( \gamma_y \left( t \right)\right) \vert + \vert \dzb\! f\left( \gamma_y \left( t \right)\right)\vert\right)\cdot \vert \gamma_{y}^\prime \left( t \right)  \vert \nonumber\\
&=  \left( \frac{\vert \dz\!f \!\left( \gamma_y \left( t \right)\right) \vert+\vert \dzb\! f\! \left( \gamma_y \left( t \right) \right) \vert}{\vert \dz\!f\! \left( \gamma_y \left( t \right)\right) \vert-\vert \dzb\!f\! \left( \gamma_y \left( t \right) \right) \vert} \cdot \left( \vert \dz\!f\! \left( \gamma_y \left( t \right)\right) \vert^2 \right.\right. \nonumber\\
& \qquad \qquad\qquad\qquad\qquad\qquad\qquad \left. \left. -\vert \dzb\! f\! \left( \gamma_y \left( t \right) \right) \vert^2 \right)\!\right)^\frac{1}{2} \nonumber\\
&\leq \left( \Kf_{f}\cdot\jac\left( f \right) \left( \gamma_y \left( t \right) \right)\right)^\frac{1}{2}.
\end{align}
By applying the Cauchy-Schwarz inequality in (\ref{eq2}) and using (\ref{eq3}), we obtain
\begin{equation*}
\frac{\alpha^2}{\beta^2}\cdot l\left( y \right)\leq \Kf_f \int_{-\frac{l\left( y \right)}{2}}^{\frac{l\left( y \right)}{2}}{\jac\left( f \right)\left( \gamma_y \left( t \right)\right)dt}.
\end{equation*}
Integration with respect to $y$ gives us
$$
\frac{\alpha^2}{\beta^2}\leq \Kf_f .
$$
If we replace the horizontal segment by the vertical segment in the ellipse, by the same method we obtain 
$$
\frac{\beta^2}{\alpha^2}\leq \Kf_f ,
$$
and so, the proof is complete.
\end{proof}

\begin{center}
\begin{figure}[ht]
\centering
\psfrag{E}{$\mathcal{E}\left( \alpha, \beta\right)$}
\psfrag{F}{$\mathcal{E}\left( \beta, \alpha\right)$}
\psfrag{a}{$\alpha$}
\psfrag{b}{$\beta$}
\includegraphics[width=0.8\linewidth]{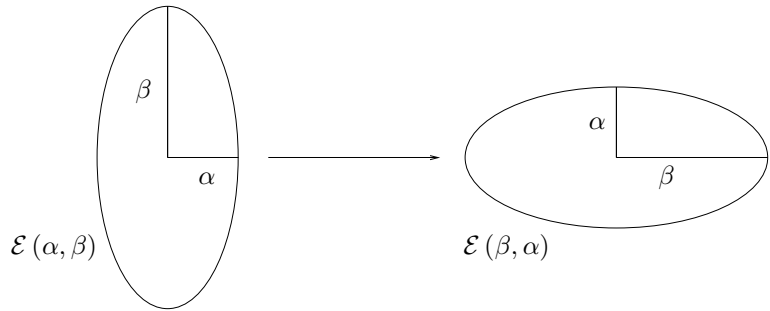}
\caption{Like in the case of rectangles, we are looking for the q.c. mapping with the smallest q.c. dilatation and a good behaviour at the boundary.}\label{fig2}
\end{figure}
\end{center}

\subsection{Solution}
We will explain how Teichm\"uller showed that Problem \ref{mainpb} is equivalent to Problem \ref{pbb} for some $\left( \alpha , \beta \right)$ that we shall specify. 

We start with $\overline{\mathbb{D}}\setminus \left[ -x, 0\right]$. Its preimage by the covering map, $z\mapsto z^2$ is  $\overline{\mathbb{D}}\setminus \ii\left[-\sqrt{x}, \sqrt{x}\right]$. The interior of the latter domain can be conformally sent by $\varphi$ onto an annulus of inner radius $1$ and outer radius $R$. To see this, we have to map (conformally) the first quadrant of the unit disc onto the first quadrant of this annulus for some $R$. Such a map exists by the same arguments given in Section 2. By successive reflections with respect to the horizontal and the  vertical axes, we can define $\varphi$. Note that according to (\ref{relationfonctionnelle}),
\begin{equation}\label{eqmod}
R=\sqrt{\Phi\left( \frac{1}{x}\right)}.
\end{equation}

We must say that Teichm\"uller used a nicer method to obtain (\ref{eqmod}). Indeed, this relation is given by the following commutative diagram 

\begin{equation*}
\xymatrix{ \mathbb{D}\setminus \ii\left[-\sqrt{x}, \sqrt{x} \right] \ar[rr]^{\varphi} \ar[dd]_{z\mapsto z^2} & & \mathcal{C}\left( 1, R \right) \ar@{.>}[dd] \\ && \\ \mathbb{D}\setminus \left[ -x, x\right] \ar[rr] && \mathcal{C}\left( 1, \Phi\left( \frac{1}{x} \right)\right)
}
\end{equation*}
where $\mathcal{C}\left( 1, R \right)$ (resp. $\mathcal{C}\left( 1, \Phi\left( \frac{1}{x} \right)\right)$) denotes the annulus whose  inner radius is $1$ and  outer radius is $R$ (resp. $\Phi\left( \frac{1}{x} \right)$).

Finally, $f_1 : z\mapsto z-\frac{1}{z}$ and $f_2 : z\mapsto z+\frac{1}{z}$ map the annulus $\mathcal{C}\left( 1, R \right)$ onto $\mathcal{E}\left( R-\frac{1}{R}, R+\frac{1}{R} \right)\setminus\ii\left[-2, 2\right]$ and  $\mathcal{E}\left( R+\frac{1}{R}, R-\frac{1}{R} \right)\setminus\left[-2, 2\right]$ respectively. To simplify notation, we set $\mathcal{E}_1 :=\mathcal{E}\left( R-\frac{1}{R}, R+\frac{1}{R} \right)$ and $\mathcal{E}_2 :=\mathcal{E}\left( R+\frac{1}{R}, R-\frac{1}{R} \right)$. Thus, we have two new maps, $p_1 := \left( \varphi^{-1}\circ f_1^{-1} \right)^2$ and $p_2 := \left( \varphi^{-1}\circ f_2^{-1} \right)^2$.  The mapping $p_1$ (resp. $p_2$) can be extended  to a map from $\mathcal{E}_1$ (resp. $\mathcal{E}_2$) to $\mathbb{D}$ such that $0$ is sent to $0$ (resp. $-x$). We denote the associated maps again by  $p_1$ and $p_2$. For more details, see Figure \ref{fig4}. Note that  in Teichm\"uller's paper \cite{T31}, there is an equivalent figure. 

Now, we remark that $p_1 : \mathcal{E}_1 \rightarrow \mathbb{D}$ (resp. $p_2 : \mathcal{E}_2 \rightarrow \mathbb{D}$) is a two-sheeted ramified covering, where the branch point is $0$ (resp. $-x$).

\begin{center}
\begin{figure}[!ht]
\centering
\psfrag{0}{$0$}
\psfrag{-x}{$-x$}
\psfrag{ix}{$\ii\sqrt{x}$}
\psfrag{-ix}{$-\ii\sqrt{x}$}
\psfrag{D}{$\mathbb{D}$}
\psfrag{g1}{$f_{1}^{-1}$}
\psfrag{g2}{$f_{2}^{-1}$}
\psfrag{fi}{$\varphi^{-1}$}
\psfrag{s}{$z\mapsto z^2$}
\psfrag{E1}{$\mathcal{E}_1$}
\psfrag{E2}{$\mathcal{E}_2$}
\psfrag{1}{$1$}
\psfrag{R}{$R$}
\includegraphics[width=0.82\linewidth]{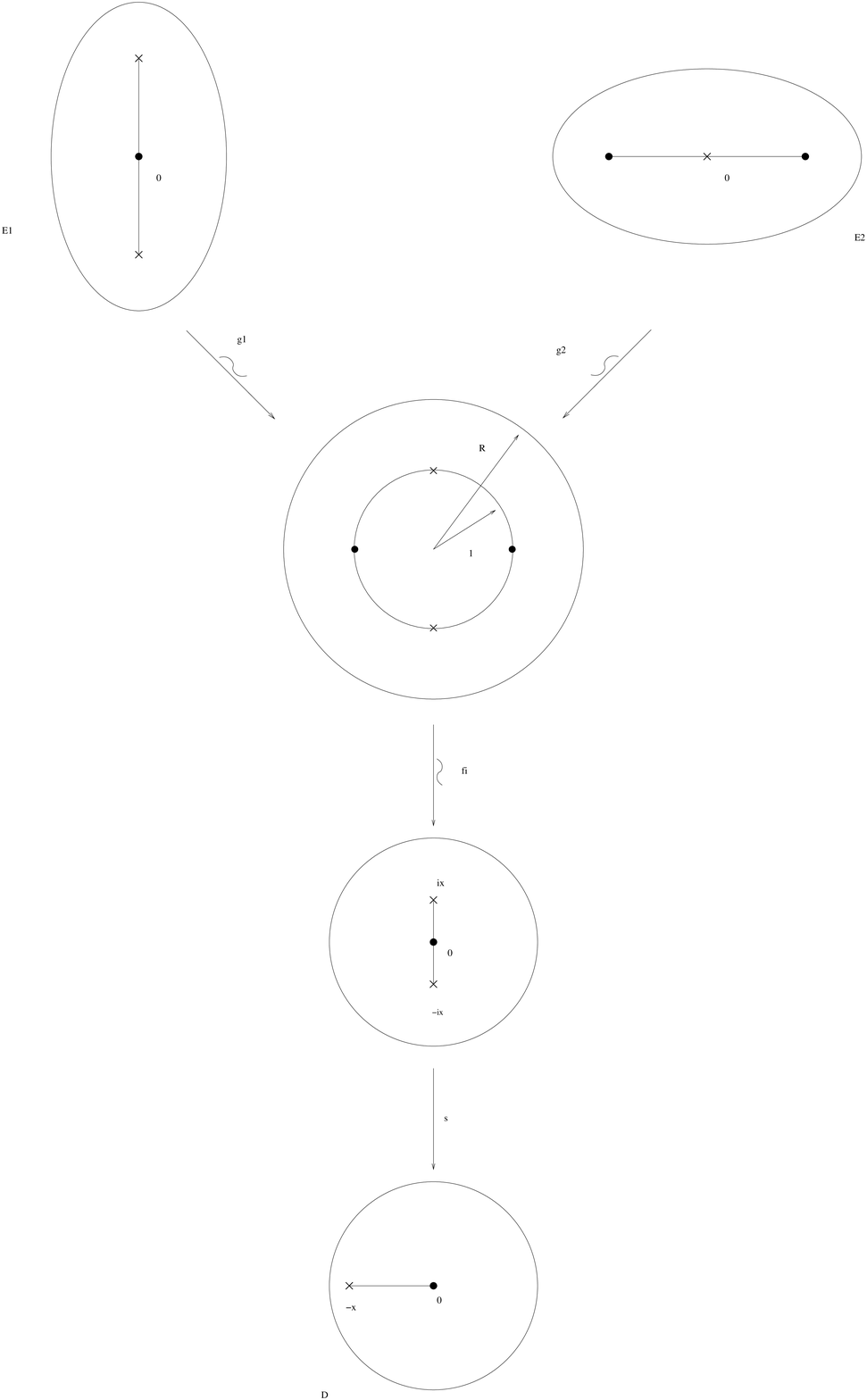}
\caption{We have two different covering spaces of $\mathbb{D}\setminus\left[-x, 0\right]$ given by $\mathcal{E}_1$ and $\mathcal{E}_2$. We distinguish by crosses and points the successive inverse images of $-x$ and $0$.}\label{fig4}
\end{figure}
\end{center}

We now have all the elements to solve our problem. We start by recalling the problem.  Let $f$ be a q.c. mapping from $\mathbb{D}$ to $\mathbb{D}$ such that $f\left( 0 \right)=-x$ and $f_{\mid_{\bd}}=\id_{\bd}$. Since $f$ maps the branch point $0$ to the branch point $-x$, we can lift it. We denote this lift by $\tilde{f}$ (see Figure \ref{final}). It is easy to check that $\tilde{f} : \mathcal{E}_1 \rightarrow \mathcal{E}_2$ is a q.c. mapping, with the same q.c. dilatation as $f$. Furthermore, $\tilde{f}\left( 0\right)=0$ and its restriction to the boundary coincides with the restriction to the boundary of the affine map $$\tilde{f}_0 : x+\ii y\mapsto \frac{R+\frac{1}{R}}{R-\frac{1}{R}}x+\ii\frac{R-\frac{1}{R}}{R+\frac{1}{R}}y .$$ 
 Since this map is symmetric with respect to $0$, we can conclude by construction of these two 2-sheeted ramified coverings that it descends to a q.c. mapping $f_{\left( 0, x \right)}$ whith the same q.c. dilatation as $\tilde{f}_0$. Furthermore, $f_{\left( 0, x \right)}$ satisfies the conditions of Problem \ref{mainpb} and  by using Proposition \ref{prop} we conclude that 
\begin{equation}\label{minoration}
\Kf_f \geq \left(\frac{R^2+1}{R^2-1}\right)^2,
\end{equation}
with equality if and only if $f=f_{\left( 0, x \right)}$. We call $f_{\left( 0, x \right)}$ the \textit{extremal map} for Problem \ref{mainpb}. 

It is important to note that the existence and the uniqueness of such a mapping cannot be deduced from the so-called \textit{Teichm\"uller theorem},  whose the first statement can be found in \cite{T20}.\footnote{ Teichm\"uller proved in \cite{T20} the uniqueness. The existence is stated there as a ``conjecture.''  Teichm\"uller proved existence (for closed surfaces) later  in \cite{T29}. We refer to the corresponding commentaries \cite{alberge&papadop&su} and \cite{T29C}.} Indeed, in the present case, all points (and not only a finite number) on the boundary are fixed.   

However, let us observe the following interesting fact. We recall  that the \emph{Beltrami differential}  associated with a q.c. mapping $f : \mathbb{D}\rightarrow\mathbb{D}$ is an element of $\textrm{L}^{\infty}\left( \mathbb{D} \right)$ which is defined by
$$
\mu_{f}:= \frac{\dzb\! f}{\dz\! f}.
$$
As $f_{\left( 0, x \right)}\circ p_1 = p_2 \circ \tilde{f}_0$, we conclude by using conformality of $p_i$ $\left( i=1,2 \right)$ that 
$$
\mu_{f_{\left( 0, x\right)}}\circ p_1 = \left( \frac{p_{1}^{\prime}}{\vert p_{1}^{\prime} \vert}\right)^2 \cdot k_{\left( 0,x\right)};
$$
where $k_{\left( 0, x\right)} = \frac{\Kf_{f_{\left( 0, x \right)}} +1}{\Kf_{f_{\left( 0, x \right)}}-1}$. In other words, we have
\begin{equation}\label{teichmullermapping}
\mu_{f_{\left( 0, x\right)}} = k_{\left( 0, x \right)}\cdot \frac{\overline{\phi}}{\vert \phi\vert},
\end{equation}
 where $\phi$ is a meromorphic function on $\mathbb{D}$ with a pole of order $1$ at $0$. Through Equality (\ref{teichmullermapping}), the knowlegeable reader will recognize the general expression of what we call the \textit{Teichm\"uller mapping}. By the way, there are works of Strebel where Equality (\ref{teichmullermapping}) is a consequence of the so-called \textit{Frame Mapping Criterion}. We refer to \cite{strebel, strebel2}. See also \cite{reichsurvey} (p. 124).  We have also to mention §159 of \cite{T29} where Teichm\"uller had already guessed that  the extremal map statisfies Equation (\ref{teichmullermapping}). In fact, Teichm\"uller explained that for a given homeomorphism of the disc (i.e. a condition for all boundary points) we can always extend this map to a map with the smallest q.c. dilation and which is related to a quadratic differential by an equation of type (\ref{teichmullermapping}). Note that this is at the idea of what is called the \textit{non-reduced} Teichm\"uller theory and for which Problem \ref{mainpb} is an example.

\begin{center}
\begin{figure}[ht]
\centering
\psfrag{0}{$0$}
\psfrag{-x}{$-x$}
\psfrag{D}{$\mathbb{D}$}
\psfrag{f}{$f$}
\psfrag{f1}{$\tilde{f}$}
\psfrag{p}{$p_1$}
\psfrag{q}{$p_2$}
\psfrag{E1}{$\mathcal{E}_1$}
\psfrag{E2}{$\mathcal{E}_2$}
\includegraphics[width=0.7\linewidth]{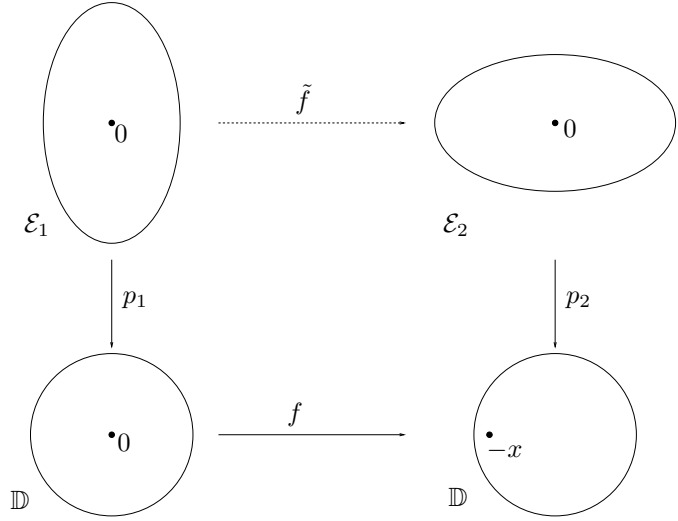}
\caption{A q.c. mapping $f$ from $\mathbb{D}$ to $\mathbb{D}$ such that $f\left( 0 \right)=-x$ can be lifted to a q.c. mapping with the same q.c. dilatation.}\label{final}
\end{figure}
\end{center}

\sloppy
Before getting further, let us note that to solve Problem \ref{mainpb}, we are in a situation  equivalent  to the Gr\"otzsch problem. Moreover, to solve the Gr\"otzsch problem for two rectangles $\mathcal{Q}_{1}\left( a_1 , b_1 , c_1 , d_1 \right)$ and $\mathcal{Q}_2 \left( a_2 , b_2 , c_2 , d_2 \right)$, we only need to consider q.c. mappings which send  $a_1$ to $a_2$, $b_1$ to $b_2$, $c_1$ to $c_2$ and $d_1$ to $d_2$. With this in mind, we see that if $f$ is a q.c. mapping from $\mathbb{D}$ to itself sending $0$ to $-x$, preserving the boundary and fixing $1$ and $-1$, then its lift $\tilde{f} : \mathcal{E}_1 \rightarrow \mathcal{E}_2$ sends the four extremal points of $\mathcal{E}_1$ to the four extremal points of $\mathcal{E}_2$. Thus, let us ask the following question:
\begin{prob} 
Is it possible to find and describe the quasiconformal mapping from $\mathbb{D}$ to itself such that 
\begin{itemize}
\item the images of $1$, $-1$ and $0$ are respectively $1$, $-1$ and $-x$ where $0<x<1$,
\item its quasiconformal dilatation is smallest possible?
\end{itemize}
\end{prob}
\fussy

\subsection{First consequences}

We have just seen that the extremal map $f_{\left(0, x \right)} : \mathbb{D}\rightarrow\mathbb{D}$ for Problem \ref{mainpb} has a q.c. dilation $\Kf_{f_{\left(0, x \right)}}$ equal to $\left(\frac{R^2+1}{R^2-1}\right)^2$.  Moreover, as we wrote above, $R=\sqrt{\Phi\left( \frac{1}{x}\right)}$, and so
\begin{equation}\label{eq}
\Kf_{f_{\left(0, x \right)}}=\left( \frac{\Phi\left( \frac{1}{x}\right)+1}{\Phi\left( \frac{1}{x}\right)-1}\right)^2.
\end{equation}\label{dilatationexacte}
By using (\ref{asympt}) in (\ref{dilatationexacte}), we conclude that
\begin{equation}\label{equiv}
\Kf_{f_{\left(0, x \right)}}=_{0^+}1+x+\emph{o}_{0^+}\!\left( x \right),
\end{equation}
which means that 
$$
\frac{\Kf_{f_{\left(0, x \right)}} - 1-x}{x} \underset{x\rightarrow 0^+}{\longrightarrow} 0.
$$

On the other hand, the right hand side of  (\ref{encadr}) gives us
$$
\Kf_{f_{\left(0, x \right)}}>\left( 1+\frac{x}{2}\right)^2.
$$
The last inequality leads us to
\begin{corollaire}\label{cor}
Let $f : \mathbb{D}\rightarrow\mathbb{D}$ be a q.c. mapping such that $f_{\mid_{\partial\!\mathbb{D}}}=\id_{\partial\!\mathbb{D}}$ and $f\left( 0 \right)\in \left] -1, 0\right]$. Then
$$
\vert f\left(0\right)\vert\leq 2\left( \Kf_{f}^\frac{1}{2}-1\right).
$$
\end{corollaire}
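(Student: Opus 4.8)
The plan is to derive Corollary \ref{cor} directly from the sharp computation of $\Kf_{f_{(0,x)}}$ combined with the extremality statement \eqref{minoration} and the Gr\"otzsch estimate \eqref{encadr}. The key point is that for the specific extremal map $f_{(0,x)}$ associated with the displacement $x = |f(0)|$, we have the exact formula $\Kf_{f_{(0,x)}} = \left(\frac{\Phi(1/x)+1}{\Phi(1/x)-1}\right)^2$, and \eqref{minoration} tells us that any q.c. map $f$ of the disc fixing the boundary pointwise with $f(0) = -x$ satisfies $\Kf_f \geq \Kf_{f_{(0,x)}}$.

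First I would reduce to the case $f(0) = -x$ with $0 < x < 1$; if $f(0) = 0$ there is nothing to prove since the right-hand side is nonnegative (indeed $\Kf_f \geq 1$). Next, I would invoke \eqref{minoration} to get $\Kf_f \geq \left(\frac{\Phi(1/x)+1}{\Phi(1/x)-1}\right)^2$. Then I would use the lower bound in \eqref{encadr}, namely $\Phi(R) > R$ with $R = 1/x$, which gives $\Phi(1/x) > 1/x$. Since the function $t \mapsto \frac{t+1}{t-1}$ is decreasing on $(1,\infty)$, the inequality $\Phi(1/x) > 1/x$ yields $\frac{\Phi(1/x)+1}{\Phi(1/x)-1} < \frac{1/x+1}{1/x-1} = \frac{1+x}{1-x}$. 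Hmm — but that goes the wrong way for a lower bound on $\Kf_f$. The correct move, as the excerpt indicates just before the statement, is instead to observe that $\Phi(1/x) > 1/x$ together with monotonicity of $t \mapsto \left(\frac{t+1}{t-1}\right)^2$ being \emph{decreasing} forces us to bound $\Phi(1/x)$ from \emph{above}; so I would use $\Phi(1/x) < 4/x$ from the right-hand inequality of \eqref{encadr}, giving $\Kf_{f_{(0,x)}} > \left(\frac{4/x+1}{4/x-1}\right)^2$... that's still not matching. Let me reconsider: the excerpt states $\Kf_{f_{(0,x)}} > (1 + x/2)^2$, which comes from $\left(\frac{R^2+1}{R^2-1}\right)^2$ with $R^2 = \Phi(1/x)$ and using $\Phi(1/x) < 4/x$ so that $\frac{R^2+1}{R^2-1} = 1 + \frac{2}{R^2-1} > 1 + \frac{2}{4/x - 1}$... one checks $\frac{2}{4/x-1} \geq \frac{x}{2}$ is equivalent to $4 \geq 4/x - 1$, i.e. $x \geq 4/5$, which fails for small $x$. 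So actually the bound $\Kf_{f_{(0,x)}} > (1+x/2)^2$ must use $\Phi(1/x) < 4/x$ more carefully, or perhaps uses $\Phi(1/x) \le 4/x$ only asymptotically; regardless, I would take that stated inequality as given (it is asserted in the line immediately preceding the corollary).

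So the cleanest route: taking $\Kf_{f_{(0,x)}} > (1 + x/2)^2$ as established in the excerpt, and combining with $\Kf_f \geq \Kf_{f_{(0,x)}}$ from \eqref{minoration}, I get $\Kf_f > (1 + x/2)^2$, hence $\Kf_f^{1/2} > 1 + x/2$, i.e. $x < 2(\Kf_f^{1/2} - 1)$. Since $x = |f(0)|$, this is exactly $|f(0)| \leq 2(\Kf_f^{1/2} - 1)$ (with the inequality even strict when $f(0) \neq 0$, but the non-strict form covers the case $f(0) = 0$ uniformly). The main obstacle here is purely bookkeeping: making sure the direction of each inequality is right, since we chain a lower bound from extremality with a lower bound on the extremal dilatation from the Gr\"otzsch estimate — both must point the same way, and the monotonicity of $t \mapsto \left(\frac{t+1}{t-1}\right)^2$ on $(1,\infty)$ (decreasing) is what makes the upper bound $\Phi(1/x) < 4/x$ translate into a lower bound on $\Kf_{f_{(0,x)}}$. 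Once that is pinned down the argument is a one-line substitution.
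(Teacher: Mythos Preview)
Your final argument is correct and is exactly the paper's approach: chain the extremality bound $\Kf_f \ge \Kf_{f_{(0,x)}}$ with the estimate $\Kf_{f_{(0,x)}} > (1+x/2)^2$ (obtained from the right-hand inequality $\Phi(1/x) < 4/x$ of \eqref{encadr} and the fact that $t\mapsto \frac{t+1}{t-1}$ is decreasing), then take square roots.

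One small correction to your exploratory detour: the inequality $\dfrac{2}{4/x-1}\ge \dfrac{x}{2}$ that you tried to verify is equivalent, after clearing the positive denominator $4/x-1$, to $2 \ge \dfrac{x}{2}\bigl(\tfrac{4}{x}-1\bigr)=2-\tfrac{x}{2}$, i.e.\ $x\ge 0$ --- so it holds for \emph{every} $0<x<1$, not only for $x\ge 4/5$. Thus the bound $\Kf_{f_{(0,x)}}>(1+x/2)^2$ follows directly from $\Phi(1/x)<4/x$ with no asymptotic caveat, and there is no need to take it on faith from the text.
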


 Note that there is a small mistake in Teichm\"uller's paper  which is considered as a ``misprint'' by Earle and Lakic in \cite{earle&lakic}. Indeed, Teichm\"uller forgot to take the power $2$ in the right hand side of Relation (\ref{eq}) and so he obtained a different asymptotic behaviour in (\ref{equiv}) and a different upper bound in Corollary \ref{cor}. The same 
mistake appears  in \cite{anecdotegehring, kra}.\footnote{ I. Kra informed the author of the way he discovered this mistake. Kra needed some consequences of Teichm\"uller results to write \cite{kra} and he used for this the Gehring paper \cite{anecdotegehring}. Gehring found this error only after publishing \cite{anecdotegehring} and when he knew that Kra used his paper, he informed him.} About this mistake, we refer also to the editor footnote of \cite{T29}.

We denote the hyperbolic distance\footnote{We use the metric with constant curvature $-1$.} on the disc by $d_{\mathbb{D}}\left(. , . \right)$. According to (\ref{dilatationexacte}),  we can express the q.c. dilatation of $f_{\left(0, x \right)}$ with respect to this distance by the following formula:

\begin{equation}\label{eqdist}
\log \left(  \Kf_{f_{\left(0, x \right)}} \right) = 2\cdot d_{\mathbb{D}}\left( 0, \frac{1}{\Phi\left( \frac{1}{x} \right)}\right).
\end{equation}

\section{Some applications}

In this section, we mention some applications of  Teichm\"uller's result obtained by various authors.

\subsection{Kra's distance} Before setting the Kra distance, we show an easy extension of  Teichm\"uller's result. By extension, we mean to find for any distinct pair of points in $\mathbb{D}$ the q.c. mapping from $\mathbb{D}$ to $\mathbb{D}$ sending one point to the other, keeping the boundary pointwise fixed and with the smallest q.c. dilatation.  

Let $z_1$ and $z_2$ be two distinct points in $\mathbb{D}$. We denote by $\varphi_{\left( z_1 , z_2 \right)}$ the biholomorphism of the disc which sends $z_1$ to $0$ and $z_2$ to  some point $-x$, $0<x<1$. It is easy to check that the extremal map\footnote{We mean here the map with the smallest q.c. dilatation, sending $z_1$ to $z_2$ and keeping the boundary pointwise fixed.} is 
$$
f_{\left(z_1 , z_2 \right)}=\varphi_{\left( z_1 , z_2 \right)}\circ f_{\left( 0, x \right)}\circ \varphi_{\left( z_1 , z_2 \right)}^{-1},
$$
where $f_{\left( 0,x \right)}$ is the previous extremal map. The Beltrami differential of $f_{\left( z_1 , z_2 \right)}$ is related to a meromorphic function on the disc with a pole of order $1$ at $z=z_1$ by a relation analogous to (\ref{teichmullermapping}). Strebel calls in \cite{strebel} such a mapping the \textit{Teichm\"uller shift}.

Furthermore, 
\begin{equation}\label{eqdistance}
d : \left( z_1 , z_2 \right) \in \mathbb{D}\times \mathbb{D} \mapsto \frac{1}{2}\log\left( \Kf_{f_{\left( z_1 , z_2 \right)}} \right)
\end{equation} 
defines a new distance on the disc. Moreover, as Kra observed in \cite{kra}, $d$ is a complete metric.

We have all the ingredients to define the Kra distance. Let $S$ be a hyperbolic Riemann surface of finite type $\left( g, n \right)$, where $g$ is the genus and $n$ the number of punctures. We recall that a hyperbolic Riemann surface is a Riemann surface whose  universal cover is the unit disc. This implies in particular that $S$ carries a hyperbolic metric. Kra defined in \cite{kra} a new distance on $S$ as follows. For any two points $x$ and $y$ in $S$, he sets 
\begin{equation}\label{def4}
d_{\textrm{Kr}}\left( x, y \right) := \frac{1}{2}\log{\inf_{f}{\Kf_f}},
\end{equation} 
where the infimum is taken  over all q.c. mappings\footnote{A q.c. mapping on a Riemann surface is a mapping whose a lift to the universal cover is a q.c. mapping and the q.c. dilatation is the q.c. dilatation of this lift.} $f$ isotopic to the identity mapping and sending $x$ to $y$. This distance is now called the \textit{Kra distance}.\footnote{It seems that this name  appears for the first time in \cite{shen}.} From a compactness property of q.c. mappings we know that there always exists a q.c. mapping which attains the infimum in (\ref{def4}). Kra obtained the uniqueness of such a mapping if $x$ and $y$ are close enough for the hyperbolic metric on $S$ (see Proposition 6. in \cite{kra}). Furthermore, he showed that $d_{\textrm{Kr}}$ is equivalent to the hyperbolic metric but not proportional to it unless $S$ is the thrice-punctured sphere. In this exceptional case the two metrics coincide. It is of interest to note that the idea of Kra's distance already exists in \cite{T20}. Indeed, Teichm\"uller introduced such a distance and he showed in §27  that it coincides with the hyperbolic distance in the case of the thrice-punctured sphere. In the same paper, Teichm\"uller  explained in §160 that up to a condition, $S$ equipped with $d_{\textrm{Kr}}$ is a \textit{Finsler manifold}.

\subsection{About a problem of Gehring and a little bit more}

The \emph{Gehring problem}, which could be seen as a dual of Problem \ref{mainpb}, is the following. Given $K>1$, we want to describe the  value

\begin{equation}\label{eqgehring}
h_{\mathbb{D}}\left( K \right):= \sup{\left\lbrace d_{\mathbb{D}}\left( z, f\left( z \right) \right) \, \mid \, z\in\mathbb{D} \textrm{ and } f\in\mathcal{Q}_{\mathbb{D}}\left( K \right) \right\rbrace};
\end{equation}
where $\mathcal{Q}_{\mathbb{D}}\left( K \right)$ denotes the set of all $K$-q.c. mappings from $\mathbb{D}$ to $\mathbb{D}$ which hold the boundary  pointwise fixed.

This problem can be addressed for any planar domain $\Omega$ with at least $3$ boundary points. Indeed, for such a domain, we know that the  universal cover is the unit disc, so by pushing forward the hyperbolic metric on the disc, we can define a hyperbolic metric of constant curvature $-1$ on $\Omega$. We denote it by $d_{\Omega}\left( \cdot , \cdot \right)$ and we may want to determine the value of (\ref{eqgehring}) by considering $d_{\Omega}$ instead of $d_{\mathbb{D}}$.

Krzyz gave a value for (\ref{eqgehring}) in \cite{krzyz}. He proved that there exists $z_0 \in\mathbb{D}$ and $f_K \in\mathcal{Q}_{\mathbb{D}}\left( K \right) $ such that 
$$
h_{\mathbb{D}}\left( K \right)=d_{\mathbb{D}}\left( z_0, f_{K}\left( z_0 \right) \right).  
$$
He gave a precise value of $h_{\mathbb{D}}\left( K \right)$ and he showed, by using an analogue of Corollary \ref{cor}, that $f_K$ is the extremal map with respect to the Teichm\"uller problem (i.e. the extension of Problem \ref{mainpb} where the pair of points is $\left( z_0 , f_{K}\left( z_0 \right) \right)$.

Later, Solynin and Vuorinen showed in \cite{vuorinen4} that the supremum of (\ref{eqgehring}) is attained for a unique map, the map  described above.

The Gehring problem can also be addressed  for domains in $\mathbb{R}^n$, where $n>2$. See for example \cite{vuorinen1} and \cite{vuorinen2}. These two papers are related to a paper of Martin \cite{martinams}. Furthermore, Martin worked in \cite{martinmeandistorsion} on an extremal problem  close to Teichm\"uller's one. To be more precise, he considered for $0\leq x <1$, the value

\begin{equation}\label{inf}
\inf{\left\lbrace \frac{1}{\pi} \iint_{\mathbb{D}}{\Kf_{f}\left( z \right)\frac{1}{2} dz\wedge d\bar{z}} \, \mid \, f \textrm{ is q.c., } f\left( 0\right)=-x \textrm{ and } f_{\mid_{\bd}}=\id_{\bd} \right\rbrace}.
\end{equation}

He showed that if $x>0$, the infimum in (\ref{inf}) cannot be attained by a q.c. mapping.


\end{document}